\newtheorem{theorem}{Theorem}[section]
\newtheorem{corollary}[theorem]{Corollary}
\theoremstyle{definition}
\newtheorem{definition}[theorem]{Definition}
\numberwithin{equation}{section}
\newcommand\N {{\mathbb N}} 
\newcommand\R {{\mathbb R}}
\numberwithin{equation}{section}
\title[Infinitesimals via Cauchy sequences: Refining equivalence]
{Infinitesimals via Cauchy sequences: Refining the classical
equivalence}
\author{Emanuele Bottazzi} \address{E.~Bottazzi, Department of
Mathematics F. Casorati, University of Pavia, Via Ferrata, 5, 27100
Pavia, Italy} \email{emanuele.bottazzi@unipv.it,
emanuele.bottazzi.phd@gmail.com}
\author{Mikhail G. Katz} \address{M.~Katz, Department of Mathematics,
Bar Ilan University, Ramat Gan 5290002 Israel}
\email{katzmik@math.biu.ac.il}
\begin{document}

\thispagestyle{empty}


\begin{abstract}
A refinement of the classic equivalence relation among Cauchy
sequences yields a useful infinitesimal-enriched number system.  Such
an approach can be seen as formalizing Cauchy's sentiment that a null
sequence ``becomes'' an infinitesimal.  We signal a little-noticed
construction of a system with infinitesimals in a 1910 publication by
Giuseppe Peano, reversing his earlier endorsement of Cantor's
belittling of infinitesimals.
\end{abstract}

\keywords{Cauchy sequence; hyperreal; infinitesimal}

\maketitle

\today

\section
{Historical background}
\label{one}

Robinson developed his framework for analysis with infinitesimals in
his 1966 book~\cite{Ro66}.  There have been various attempts either
\begin{enumerate}
\item
to obtain clarity and uniformity in developing analysis with
infinitesimals by working in a version of set theory with a concept of
infinitesimal built in syntactically (see e.g., \cite{Ka04},
\cite{21d}), or
\item
to define nonstandard universes in a simplified way by providing
mathematical structures that would not have all the features of a
Hewitt--Luxemburg-style ultrapower \cite{He48}, \cite{Lu64}, but
nevertheless would suffice for basic application and possibly could be
helpful in teaching.
\end{enumerate}
The second approach has been carried out, for example, by James Henle
in his non-nonstandard Analysis \cite{He99} (based on
Schmieden--Laugwitz \cite{Sc58}; see also \cite{La01}), as well as by
Terry Tao in his ``cheap nonstandard analysis'' \cite{Ta}.  These
frameworks are based upon the identification of real sequences
whenever they are eventually equal.%
\footnote{I.e., modulo the Fr\'echet filter.}
A precursor of this approach is found in the work of Giuseppe Peano.
In his 1910 paper \cite{Pe10} (see also \cite{Pe57}) he introduced the
notion of the end (\emph{fine}; pl.\;\emph{fini}) of a function (i.e.,
sequence) based upon its eventual behavior.
%
%
Equality and order between such \emph{fini} are defined as follows:
\begin{itemize}
\item
$\text{fine}(f) = \text{fine}(g)$ if and only if~$f(n) = g(n)$
eventually;
\item 
$\text{fine}(f) < \text{fine}(g)$ if and only if~$f(n) < g(n)$
eventually.
\end{itemize}
The collection
\begin{equation}
\label{e11c}
P\hspace{-1pt}e=\big\{\text{fine}(f)
\colon\N\stackrel{f}{\rightarrow}\R\big\}
\end{equation}
extends the set of real numbers (included as constant sequences), is
partially ordered, and includes infinite and infinitesimal elements.
For instance, if~$f(n) = n$, then~$\text{fine}(f) > r$ for every~$r
\in \R$, since eventually~$f(n) > r$. Similarly, if~$g(n) = n^{-1}$,
then~$0<\text{fine}(g) < r$ for every positive~$r \in \R$, since
eventually~$0 < g(n) < r$.  Moreover, Peano defines operations on the
\emph{fini}.  For instance,~$\text{fine}(f) + \text{fine}(g)$ can be
defined as~$\text{fine}(f+g)$,
and~$\text{fine}(f)\cdot\text{fine}(g)$, as~$\text{fine}(fg)$.  The
construction results in Peano's partially ordered non-Archimedean
ring~$P\hspace{-1pt}e$ of~\eqref{e11c} with zero divisors that
extends~$\R$.  Commenting on Peano's 1910 construction, Fisher notes
that here 
\begin{quote}
Peano contradicts his contention of 1892, following Cantor, that
constant infinitesimals are impossible.  (Fisher \cite{Fi81}, 1981,
p.\;154)
\end{quote}
Peano's 1910 article seems to have been overlooked by Freguglia who
claims ``to put Peano's opinion about the unacceptability of the
actual infinitesimal notion into evidence'' \cite[p.\;145]{Fr21}.

\section{Refining the equivalence relation on Cauchy sequences}

The present text belongs to neither of the categories summarized in
Section~\ref{one}.  Rather, we propose to exploit a concept that is a
household word for most of the mathematical audience to a greater
extent than either Fr\'echet filters or ends of functions, namely
Cauchy sequences.  More precisely, we propose to factor the classical
homomorphism
\[
C\to\R,
\]
from the ring~$C$ of Cauchy sequences to its quotient space~$\R$,
through an intermediate integral domain~$D$, by refining the
traditional equivalence relation on~$C$.  The composition
\[
C\to D\to\R
\]
(``undoing'' the refinement) is the classical homomorphism.

The classical construction of the real line~$\R$ involves declaring
Cauchy sequences~$u=(u_i)$ and~$v=(v_i)$ to be equivalent if the
sequence~$(u_i-v_i)$ tends to zero.  Instead, we define an equivalence
relation~$\sim$ on~$C$ by setting~$u\sim v$ if and only if they
actually coincide on a \emph{dominant} set of indices~$i$ (this would
be true in particular if~$\text{fine}(u)=\text{fine}(v)$).  The notion
of dominance is relative to a nonprincipal ultrafilter~$\mathcal U$ on
the set of natural numbers.%
\footnote{An ultrafilter on $\N$ is a maximal collection of subsets
  of~$\N$ which is closed under finite intersection and passage to a
  superset.  An ultrafilter is nonprincipal if and only if it includes
  no finite subsets of~$\N$.}
The complement of a dominant set is called negligible.  Namely, we
have
\begin{equation}
\label{e11b}
u\sim v \text{ if and only if } \{ i\in\N \colon u_i=v_i\}\in \mathcal
U.
\end{equation}
We set~$D=C/\!\!\sim$.  Then a null sequence generates an
infinitesimal of~$D$.  By further identifying all null sequences with
the constant sequence~$(0)$, we obtain an epimorphism
\begin{equation}
\label{e11}
\mathbf{sh} \colon D\to\R,
\end{equation}
assigning to each Cauchy sequence, the value of its limit in~$\R$.  It
turns out that such a framework is sufficient to develop infinitesimal
analysis in the sense specified in Section~\ref{s2}.

\section{Null sequences, infinitesimals, and P-points}
\label{s2}

For the purposes of the definition below, it is convenient to
distinguish notationally between~$\N$ used as the index set of
sequences, and~$\omega$ used as the collection on which our filters
are defined.

\begin{definition}
A non-principal ultrafilter~$\mathcal U$ on~$\omega$ is called a
\emph{P-point} if for every partition~$\{ C_n \colon n\in\N \}$
of~$\omega$ such that
\[
(\forall n\in\N)\, C_n \not\in \mathcal U,
\]
there exists some~$A \in \mathcal U$ such that~$A \cap C_n$ is a
finite set for each~$n$.
\end{definition}

Informally,~$\mathcal U$ being a P-point means that every partition
of~$\omega$ into negligible sets is actually a partition into
\emph{finite} sets, up to a~$\mathcal U$-negligible subset
of~$\omega$.  The existence of P-point ultrafilters is consistent with
the Zermelo--Fraenkel set theory with the Axiom of Choice (ZFC) as is
their nonexistence, as shown by Shelah; see \cite{Wi}.  Their
existence is guaranteed by the Continuum Hypothesis by Rudin
\cite{Ru56}.

Theorem~\ref{t22} below originates with Choquet \cite{Ch68}; see Benci
and Di Nasso \cite[Proposition 6.6]{Be03}, and the end of this section
for additional historical remarks.

Denote by~$C\subseteq\R^\N$ the ring of Cauchy sequences (i.e.,
convergent sequences) of real numbers, and by~$D$ its quotient by the
relation~\eqref{e11b}.

\begin{theorem}
\label{t22}
$\mathcal U$ is a P-point ultrafilter if and only if~$D$ is isomorphic
to the ring of finite hyperreals in\,~$\R^\N\!/\mathcal U$ where the
epimorphism {\rm\textbf{sh}} of~\eqref{e11} corresponds to the
shadow/standard part.
\end{theorem}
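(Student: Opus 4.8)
The plan is to work with the natural comparison map rather than to build an abstract isomorphism from scratch. First I would observe that the relation $\sim$ of~\eqref{e11b} is precisely the restriction to $C$ of the ultrapower relation defining $\R^\N/\mathcal U$, so that the assignment $[u]_\sim \mapsto [u]_{\mathcal U}$ is a well-defined, injective ring homomorphism $j\colon D \to \R^\N/\mathcal U$. Injectivity is immediate since the two relations coincide on $C$, while the fact that $\sim$ is a congruence (so that $D$ is a ring and $j$ a homomorphism) follows from closure of $\mathcal U$ under finite intersection. Because every convergent sequence is bounded, the image of $j$ lands inside the subring of finite hyperreals; and because a nonprincipal ultrafilter contains every cofinite set, the ordinary limit of a convergent sequence agrees with its $\mathcal U$-limit, so $j$ intertwines $\mathbf{sh}$ with the standard part $\st$. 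The whole theorem then reduces to the assertion that $j$ is surjective onto the finite hyperreals if and only if $\mathcal U$ is a P-point.

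For the ``if'' direction I would take a finite hyperreal $\xi$, pick a bounded representative $v$ (replacing $v$ by $0$ off the set in $\mathcal U$ where $|v_i|$ is below a bound), and set $L=\st(\xi)$, so that $\{i : |v_i-L|\ge \varepsilon\}\notin\mathcal U$ for every real $\varepsilon>0$. The idea is to upgrade this ``$\mathcal U$-smallness'' to honest convergence along a single set in $\mathcal U$. I would partition $\N$ into the blocks $C_n$ on which $|v_i-L|$ lies in consecutive dyadic scales, together with the exceptional blocks $\{i:|v_i-L|\ge 1\}$ and $\{i:v_i=L\}$; each block is $\mathcal U$-negligible, the latter two being absorbed into the first scale (and if $\{i:v_i=L\}\in\mathcal U$ then $\xi$ is already represented by the constant sequence $L$ and we are done). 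Applying the P-point property yields $A\in\mathcal U$ meeting each block in a finite set. Since for fixed $\varepsilon$ only finitely many blocks carry values $|v_i-L|\ge\varepsilon$, the set $\{i\in A : |v_i-L|\ge\varepsilon\}$ is finite, whence $v_i\to L$ as $i\to\infty$ through $A$. Extending $v|_A$ by the constant $L$ produces a convergent sequence $u$ with $\{i:u_i=v_i\}\supseteq A\in\mathcal U$, so $j([u]_\sim)=\xi$.

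For the ``only if'' direction I would argue contrapositively. If $\mathcal U$ is not a P-point, fix a partition $\{C_n : n\in\N\}$ into negligible blocks such that every $A\in\mathcal U$ meets some block infinitely, and define the bounded sequence $v_i=1/n$ for $i\in C_n$. Since each finite union $\bigcup_{n\le N}C_n$ is negligible, one checks that $\st([v]_{\mathcal U})=0$, so $[v]_{\mathcal U}$ is a positive infinitesimal finite hyperreal. If it were $j([u]_\sim)$ for a convergent $u$, then $\mathbf{sh}$-compatibility would force $u\to 0$, and on $A=\{i:u_i=v_i\}\in\mathcal U$ one would have $|u_i|=1/n$ for $i\in A\cap C_n$; as $u\to 0$ makes each $\{i:|u_i|\ge 1/n\}$ finite, every $A\cap C_n$ would be finite, contradicting the choice of partition. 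Thus this finite hyperreal is missed by $j$, so $j$ fails to be onto and $D$ cannot be isomorphic to the finite hyperreals compatibly with $\mathbf{sh}$.

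I expect the main obstacle to be the ``if'' direction, specifically the passage from $\mathcal U$-convergence to genuine convergence: the careful design of a partition all of whose blocks are $\mathcal U$-negligible (in particular the honest treatment of the exceptional block where $v_i=L$) and the verification that finiteness of each $A\cap C_n$ truly upgrades to $v_i\to L$ along $A$. The converse, by contrast, is a direct unwinding of the failure of the P-point property tested against one explicitly constructed infinitesimal.
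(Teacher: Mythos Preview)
Your proposal is correct and follows essentially the same strategy as the paper: both directions hinge on the witness sequence $v_i=1/n$ on the $n$-th block, and the P-point property is used in the same way to extract a single dominant set along which a given bounded sequence genuinely converges. Your packaging is slightly more explicit---you set up the comparison map $j$, use the partition definition of P-point directly, and invoke $\st(\xi)$ rather than locating the limit by nested bisection as the paper does---but the substance of the argument is the same.
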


\begin{proof}
Suppose every bounded sequence becomes Cauchy when restricted to a
suitable~$\mathcal U$-dominant set~$Y\subseteq\N$.  Let~$X_1= \N, X_2,
X_3, \ldots$ be an inclusion-decreasing sequence of sets in~$\mathcal
U$.  Define a sequence~$f$ by setting~$f(i) = \frac1n$ for~$i \in X_n
\setminus X_{n+1}$.  By hypothesis there is a~$Y\!\in\mathcal U$ such
that~$f$ is Cauchy on~$Y$.  Suppose the intersection~$Y \cap (X_n
\setminus X_{n+1})$ is infinite for some~$n$.  Then there are
infinitely many~$i$ where~$f(i) = \frac1n$ and also infinitely
many~$i$ where~$f(i) < \frac1{n+1}$ (this happens for all~$i \in Y
\cap X_{n+2} \in \mathcal U$).  Such a sequence is not Cauchy on~$Y$.
The contradiction shows that every intersection with~$Y$ must be
finite, proving that~$\mathcal U$ is a P-point.

Conversely, suppose~$\mathcal U$ is a P-point.  Let~$f$ be a bounded
sequence.  By binary divide-and-conquer, we construct inductively a
sequence of nested compact segments~$S_i\subseteq\R$ of length tending
to zero such that for each~$i$, the sequence~$f$ takes values in~$S_i$
for a dominant set~$X_i\subseteq\N$ of indices.  By hypothesis, there
exists~$Y\!\in\mathcal U$ such that each complement~$S_i \setminus
Y$ is finite.  Then the restriction of~$f$ to the dominant set~$Y$
necessarily tends to the intersection point~$\bigcap_{i\in \N}
S_i\in\R$.

Benci and Di Nasso give a similar argument relating \emph{monotone}
sequences and selective ultrafilters in \cite[p.\;376]{Be03}.
\end{proof}

If~$\mathcal U$ is not a P-point, the natural monomorphism~$D\to
\R^\N\!/\mathcal U$ will not be onto the finite part of the
ultrapower~$\R^\N\!/\mathcal U$.

Let~$I\subseteq D$ be the ideal of infinitesimals, and denote by
$I^{-1}$ the set of inverses of nonzero infinitesimals.

\begin{corollary}
If\,~$\mathcal U$ is a P-point ultrafilter, then the hyperreal field
$\R^\N\!/\mathcal U$ decomposes as a disjoint union~$D\sqcup I^{-1}$.
\end{corollary}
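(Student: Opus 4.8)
The plan is to deduce the decomposition directly from Theorem~\ref{t22}. Because $\mathcal U$ is assumed to be a P-point, that theorem identifies $D$, through the natural monomorphism $D\to\R^\N\!/\mathcal U$, with the subring of \emph{finite} hyperreals, i.e.\ those $x$ for which $|x|<r$ for some $r\in\R$. Granting this identification, the statement reduces to two things: showing that $I^{-1}$ is precisely the set of \emph{infinite} hyperreals (those $x$ with $|x|>r$ for every $r\in\R$), and observing that the finite and infinite hyperreals split $\R^\N\!/\mathcal U$ into two disjoint classes.

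To see that $I^{-1}$ is the infinite part I would check both inclusions. If $\varepsilon\in I$ is a nonzero infinitesimal, then $|\varepsilon|<\frac1r$ for every real $r>0$, so $|\varepsilon^{-1}|>r$ for every such $r$; hence $\varepsilon^{-1}$ is infinite, and every member of $I^{-1}$ is infinite. Conversely, let $H$ be an infinite hyperreal. Then $H\neq0$, so $H^{-1}$ exists in $\R^\N\!/\mathcal U$ --- here it matters that the ultrapower of a field modulo an ultrafilter is again a field, so that nonzero elements are invertible. From $|H|>r$ for all real $r>0$ one obtains $|H^{-1}|<\frac1r$ for all such $r$, so $H^{-1}$ is a nonzero infinitesimal. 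By Theorem~\ref{t22}, $H^{-1}$, being finite, already lies in $D$, and therefore $H^{-1}\in I$; consequently $H=(H^{-1})^{-1}\in I^{-1}$. This proves that $I^{-1}$ equals the set of infinite hyperreals.

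Disjointness is then immediate, since no hyperreal can be both finite and infinite: the conditions $|x|<r$ for some real $r$ and $|x|>s$ for every real $s$ are mutually exclusive and together exhaust $\R^\N\!/\mathcal U$. In particular $0$ belongs to $D$ but not to $I^{-1}$, as $0$ is not invertible. Combining this with the identifications of $D$ with the finite hyperreals and of $I^{-1}$ with the infinite hyperreals yields $\R^\N\!/\mathcal U=D\sqcup I^{-1}$.

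I expect the only genuine subtlety to be the reverse inclusion in the middle step, namely that every infinite hyperreal really is the inverse of a nonzero infinitesimal \emph{belonging to} $D$. This is exactly where the P-point hypothesis enters through Theorem~\ref{t22}: it guarantees that the infinitesimal $H^{-1}$ admits a Cauchy representative and so lies in $I\subseteq D$. Were $\mathcal U$ not a P-point, some finite hyperreal could fail to lie in $D$, its infinite reciprocal would escape $I^{-1}$, and the union $D\cup I^{-1}$ would no longer be all of $\R^\N\!/\mathcal U$. All the remaining steps are the routine order-arithmetic of infinitesimals together with the field property of the ultrapower.
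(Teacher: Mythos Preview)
Your argument is correct and matches the paper's intended reasoning: the paper states the corollary without proof, treating it as immediate from Theorem~\ref{t22} (and remarking only that each element of $I^{-1}$ is represented by a sequence tending to infinity), and your proof simply spells out that deduction --- identify $D$ with the finite part via the theorem, check that $I^{-1}$ coincides with the infinite part, and note that finite/infinite is a dichotomy.
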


Here every element of~$I^{-1}$ can be represented by a sequence
tending to infinity.  In this sense, Cauchy sequences of reals
(together with a refined equivalence relation) suffice to construct
the hyperreal field~$\R^\ast=\R^\N\!/\mathcal U$ and enable analysis
with infinitesimals.  Thus, if~$f$ is a continuous real function on
$[0,1]$, its extension~$f^\ast$ to the hyperreal interval
$[0,1]^\ast\subseteq\R^\ast$ maps the equivalence class of a Cauchy
sequence~$(u_n)$ to the equivalence class of the Cauchy sequence
$(f(u_n))$.  The derivative of~$f$ at a real point~$x\in[0,1]$ is the
shadow
\[
\mathbf{sh}\left(\frac{f^\ast(x+\epsilon)-f^\ast(x)}{\epsilon}\right)
\]
for infinitesimal~$\epsilon\not=0$, while the integral
$\int_0^1f(x)dx$ is the shadow of the sum~$\sum_{i=1}^\mu
f^\ast(x_i)\epsilon$ where the~$x_i$ are the partition points of
$[0,1]^\ast$ into (an infinite integer)~$\mu$ subintervals, and
$\epsilon=\frac1\mu$.  For the definition of infinite Riemann sums see
Keisler \cite{Ke86}.

We elaborate on the history of the problem.  According to Cleave's
hypothesis \cite{Cl71}, every infinitesimal $\varepsilon\in\R^\ast$ is
determined by a sequence of real numbers tending to zero.  Cutland et
al.\;\cite{Cu88} showed that the Cleave hypothesis requires $\mathcal
U$ to be a P-point ultrafilter.  Benci and Di Nasso
\cite[p.\;376]{Be03} developed the problem further by relating the
Cleave hypothesis to the so-called selective ultrafilters and monotone
sequences.

\section{Concluding remarks}

\textbf{1.} Our approach can be seen as a formalisation of Cauchy's
sentiment that a null sequence ``becomes'' an infinitesimal, while a
sequence tending to infinity becomes an infinite number; see e.g.,
Bair et al.\;(\cite{20a}, 2020).

\textbf{2.}  A perspective on hyperreal numbers via Cauchy sequences
has value as an educational tool according to the post \cite{mse}.
Formally, one could start with the full ring of sequences (Cauchy or
not), and form the traditional quotient modulo a nonprincipal
ultrafilter, to obtain the standard construction of the hyperreal
field where every null sequence generates an infinitesimal (see e.g.,
\cite[p.\;913]{Ke86}, \cite{Bl14}; but there may be additional
infinitesimals).  However, pedagogical experience shows that the full
ring of sequences sometimes appears as a nebulous object to students
who are not yet familiar with the ultrafilter construction (see e.g.,
the discussion at \cite{mse}); starting with more familiar objects
such as Cauchy sequences builds upon the students' previous
experience, and may be more successful in facilitating intuitions
about infinitesimals.

\textbf{3.}  The refinement of the equivalence relation on Cauchy
sequences (modulo the existence of P-points) enables one to obtain the
(finite) hyperreals as the set of equivalence classes of Cauchy
sequences, and both equivalence classes and Cauchy sequences are
household concepts for any mathematics sophomore.  We still use
ultrafilters; however, in this setting, they are conceptually similar
to maximal ideals, routinely used in undergraduate algebra.

\textbf{4.}  The distinction between procedures and ontology is a key
related issue.  Historical mathematical pioneers from Fermat to Cauchy
applied certain procedures in their mathematics, and while modern
set-theoretic \emph{ontology} appears to be beyond their conceptual
world, many of the \emph{procedures} of modern mathematics are not
(see \cite{17d} for a more detailed discussion).  Arguably the
procedures in modern infinitesimal analysis are closer to theirs than
procedures in modern Weierstrassian analysis.  Note that the
mathematical pioneers would have been just as puzzled by Cantorian set
theory as by ultrafilters.  For example, Leibniz would have likely
rejected Cantorian set theory as incoherent because contrary to the
part-whole axiom; see \cite{21a}.

\textbf{5.}  Bertrand Russell accepted, as a matter of ontological
certainty, Cantor's position concerning the non-existence of
infinitesimals.  For an analysis of a dissenting opinion by
contemporary neo-Kantians see~\cite{13h}.

\textbf{6.}  Robinson's framework for analysis with infinitesimals is
the first (and currently the only) framework meeting the
Klein--Fraenkel criteria for a successful theory of infinitesimals in
terms of an infinitesimal treatment of the mean-value theorem and an
approach to the definite integral via partitions into infinitesimal
segments; see \cite{18i}.

\textbf{7.} Arguably, Robinson's framework is the most successful
theory of infinitesimals in applications to natural science,
probability, and related fields; see \cite{19d} as well as \cite{21b},
\cite{21c}.

\textbf{8.}  Robinson explained his choice of the name for his theory
as follows: ``The resulting subject was called by me Non-standard
Analysis since it involves and was, in part, inspired by the so-called
Non-standard models of Arithmetic whose existence was first pointed
out by T.\;Skolem'' \cite[p.\;vii]{Ro66}.

\textbf{9.}  Currently there are two popular approaches to Robinson's
mathematics: model-theoretic and axiomatic/syntactic.  Since Skolem's
construction \cite{Sk33} did not use either the Axiom of Choice or
ultrafilters, it is natural to ask whether one can develop an approach
to analysis with infinitesimals, meeting the Klein--Fraenkel criteria,
that does not refer to the notion of an ultrafilter at all.  The
answer is affirmative and was provided in \cite{21d} via an internal
axiomatic approach.  Hrbacek and Katz \cite{Hr21} present a
construction of Loeb measures and nonstandard hulls in internal set
theories.  The effectiveness of infinitesimal methods in analysis has
recently been explored in reverse mathematics; see e.g., Sanders
\cite{Sa20}.

\end{document}